\newtheorem*{conjecture}{Conjecture}
\newtheorem{theorem}{Theorem}
\newtheorem{corollary}[theorem]{Corollary}
\newtheorem{proposition}[theorem]{Proposition}
\newtheorem{lemma}[theorem]{Lemma}
\title{Entropy approach for a generalization of Frankl's conjecture}
\author{Veronica Phan*}
\thanks{*Ho Chi Minh City; email: \url{kyubivulpes@gmail.com}}
\begin{document}
\begin{abstract}
In this paper, we will use the entropy approach to derive a necessary and sufficient condition for the existence of an element that belongs to at least half of the sets in a finite family of sets.
\end{abstract}
\maketitle
\section{Introduction}
The union-closed set conjecture, or Frankl's conjecture, is a famous open problem in combinatorics. A family of set $\mathcal{F}$ is said to be union-closed if the union of any two sets from $\mathcal{F}$ belongs to $\mathcal{F}$ as well.
\begin{conjecture}
[Frankl]
For every finite union-closed family of sets $\mathcal{F}\neq\{\emptyset\}$, there exists an element $i$ such that at least half of the sets in $\mathcal{F}$ contain $i$.
\end{conjecture}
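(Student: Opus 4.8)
The plan is to follow the information-theoretic strategy for this problem: translate the combinatorial statement into an inequality between Shannon entropies of cleverly coupled random sets, and let union-closure enter as the single structural hypothesis. Throughout, write $H(\cdot)$ for Shannon entropy and $h$ for the binary entropy function.

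First I would fix the probabilistic frame. Normalize so that $\mathcal F \subseteq 2^{[n]}$ for a finite ground set $[n]$, and let $X$ be a uniformly random member of $\mathcal F$, identified with its indicator vector $(X_1,\dots,X_n)\in\{0,1\}^n$. Write $p_i = \Pr[X_i = 1]$ for the fraction of sets containing $i$. The desired conclusion is $\max_i p_i \ge 1/2$, so I would argue by contradiction, \emph{assuming} $p_i < 1/2$ for every $i$. Then I extract the consequence of union-closure: let $Y$ be an independent copy of $X$, also uniform on $\mathcal F$. Union-closure forces $X\cup Y \in \mathcal F$ almost surely, so $X\cup Y$ is supported on $\mathcal F$, whence
$$H(X\cup Y) \le \log_2|\mathcal F| = H(X) = \tfrac12\bigl(H(X)+H(Y)\bigr).$$
This is the only place the hypothesis is used; everything else is an attempt to contradict it by showing that the union is \emph{more} spread out than either factor when all $p_i$ are small.

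Next I would lower-bound $H(X\cup Y)$ coordinatewise. By the chain rule one peels off coordinates one at a time and compares, in each slot, the entropy of the OR of two independent Bernoulli($p_i$) bits — which is Bernoulli($2p_i - p_i^2$) — against $h(p_i)$ itself. Because the coordinates $X_i$ are correlated this cannot be done naively, and the technical heart is a per-coordinate convexity estimate, applied conditionally on the already-revealed coordinates, showing that for small $p$ the map $p \mapsto h(2p-p^2)$ dominates $h(p)$ by a controlled factor. Summing these estimates against the chain-rule decomposition would yield $H(X\cup Y) \ge H(X)$, contradicting the display above and completing the argument.

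The main obstacle — and the reason this is a famous open problem rather than a routine computation — is that the per-coordinate comparison is simply \emph{false} as $p$ approaches $1/2$: at $p=1/2$ one has $2p-p^2 = 3/4$ and $h(3/4) < h(1/2)$, so the OR actually \emph{loses} entropy, and optimizing the Bernoulli inequality pushes through only for $p$ below the golden-ratio threshold $(3-\sqrt5)/2 \approx 0.382$. Closing the gap to the sharp constant $1/2$ requires genuinely new input: either a correlation inequality capturing how the dependence among the $X_i$ interacts with the OR operation, or a conditioning scheme finer than coordinatewise peeling. I expect the decisive step to be isolating the exact structural inequality on a finite family (the necessary-and-sufficient condition promised in the abstract) that upgrades the golden-ratio bound to $1/2$, and then verifying that union-closure alone forces it.
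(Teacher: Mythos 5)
There is a genuine gap here, and in fact your own last paragraph concedes it: what you have written is an outline of the Gilmer/Chase--Lovett/Alweiss--Huang--Sellke/Sawin entropy argument, which is known to prove only that some element lies in a $\frac{3-\sqrt5}{2}\approx 0.382$ fraction of the sets, not in half of them. The single structural input you use is $H(X\cup Y)\le\log_2|\mathcal F|=H(X)$, and the per-coordinate estimate you propose to contradict it with --- comparing $h(2p-p^2)$ against $h(p)$ conditionally along a chain-rule peeling --- genuinely fails once some $p_i$ exceeds the golden-ratio threshold, as you yourself observe via $h(3/4)<h(1/2)$. Saying that ``the decisive step'' is to find ``either a correlation inequality \dots or a conditioning scheme finer than coordinatewise peeling'' is naming the open problem, not solving it; no contradiction is actually derived under the assumption $\max_i p_i<1/2$, so the argument does not close.

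You should also be aware that the statement you were asked to prove is stated in the paper as a \emph{conjecture} and is not proved there either. The paper's actual contribution is different and more modest: Theorem \ref{main} gives a condition on an arbitrary finite family $\mathcal F$ (the existence of an auxiliary family $\mathcal G$ with $|\mathcal G|>1$ and $\sum_{S\in\mathcal F}\log|\mathcal G(S)|\le\frac{|\mathcal F|\log|\mathcal G|}{2}$) that is \emph{equivalent} to the existence of an element in at least half the sets, proved via Lemma \ref{mainlemma}, a superadditivity/decomposition statement for $H(X\cup S)$ obtained from submodularity (Proposition \ref{basic}) rather than from the Bernoulli-OR calculation you sketch. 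The corollary then reduces Frankl's conjecture to exhibiting such a $\mathcal G$ inside a union-closed $\mathcal F$, which the paper does not do in general. So even if your outline were filled in up to the known $0.382$ barrier, it would neither prove the conjecture nor recover the paper's equivalence; the honest conclusion of your proposal is a partial result, and it should be presented as such.
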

In 2022, Gilmer \cite{Gilmer} used entropy method to establish the first constant lower bound $0.01$ for this conjecture, soon later, three preprints \cite{CL}\cite{AHS}\cite{Sawin} improved the bound to $\frac{3-\sqrt{5}}{2}$, and Lei Yu \cite{Yu} and Cambie \cite{Cambie} improved it to about $0.38234$.

In this paper, we won't just study union-closed family, but arbitrary finite family of sets as well. For $S\in[n]$ and family $\mathcal{F}\subseteq2^{[n]}$, denote $\mathcal{F}(S)$ be the family of sets which have the form $S\cup F, F\in\mathcal{F}$. Our main result is:
\begin{theorem}
\label{main}
For family $\mathcal{F}\subseteq2^{[n]}$, there exists an element $i$ such that at least half of the sets in $\mathcal{F}$ contain $i$ if and only if there exists a family $\mathcal{G}\subseteq2^{[n]},|\mathcal{G}|>1$ such that $$\sum_{S\in\mathcal{F}}\log|\mathcal{G}(S)|\leq\frac{|\mathcal{F}|\log|\mathcal{G}|}{2}$$.
\end{theorem}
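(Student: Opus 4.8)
The plan is to prove the two implications separately, after first reinterpreting the quantity $|\mathcal{G}(S)|$ combinatorially. Since the union $S\cup G$ both determines and is determined by the trace $G\setminus S=G\cap([n]\setminus S)$, I would first record that $|\mathcal{G}(S)|$ equals the number of distinct traces $|\{\,G\cap([n]\setminus S):G\in\mathcal{G}\,\}|$, i.e. the size of the projection of $\mathcal{G}$ onto the coordinates outside $S$. For the forward direction (a good element forces a good $\mathcal{G}$), I would simply exhibit $\mathcal{G}=\{\emptyset,\{i\}\}$ for an element $i$ lying in at least half the sets. Then $\mathcal{G}(S)=\{S,S\cup\{i\}\}$ has size $2$ when $i\notin S$ and size $1$ when $i\in S$, so $\sum_{S\in\mathcal{F}}\log|\mathcal{G}(S)|=|\{S\in\mathcal{F}:i\notin S\}|\log 2=(1-p_i)|\mathcal{F}|\log 2$, where $p_i$ is the fraction of sets of $\mathcal{F}$ containing $i$. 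This is at most $\tfrac{|\mathcal{F}|\log 2}{2}=\tfrac{|\mathcal{F}|\log|\mathcal{G}|}{2}$ precisely because $p_i\ge\tfrac12$.

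For the converse I would argue by contrapositive: assuming $p_i<\tfrac12$ for every $i$, I want the displayed inequality to fail for every $\mathcal{G}$ with $|\mathcal{G}|>1$. This is where entropy enters. Let $X=(X_1,\dots,X_n)$ be the indicator vector of a uniformly random $G\in\mathcal{G}$, so that $H(X)=\log|\mathcal{G}|>0$, and let $S$ be an independent uniform element of $\mathcal{F}$, for which $\Pr[i\in S]=p_i$. Since the trace $X_{[n]\setminus S}$ takes exactly $|\mathcal{G}(S)|$ values, the maximum-entropy bound gives $\log|\mathcal{G}(S)|\ge H(X_{[n]\setminus S})$, and averaging over $S$ reduces the whole problem to proving the single inequality $E_S\!\left[H(X_{[n]\setminus S})\right]\ge(1-\max_i p_i)\,H(X)$.

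I expect this last inequality to be the crux. The naive route, bounding $H(X_{[n]\setminus S})\ge H(X)-\sum_{i\in S}H(X_i)$ by subadditivity, fails, because $\sum_i H(X_i)\ge H(X)$ points the wrong way and the constant comes out too large. The fix I would use is to fix a global order on $[n]$ and apply the chain rule in that order: writing $X_{<i}=(X_1,\dots,X_{i-1})$, one has the exact identity $\sum_{i}H(X_i\mid X_{<i})=H(X)$. Dropping the coordinates of $S$ from each conditioning only increases entropy, so $H(X_{[n]\setminus S})\ge\sum_{i\notin S}H(X_i\mid X_{<i})=H(X)-\sum_{i\in S}H(X_i\mid X_{<i})$. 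Taking expectations then yields $E_S\!\left[H(X_{[n]\setminus S})\right]\ge H(X)-\sum_i p_i\,H(X_i\mid X_{<i})\ge H(X)-(\max_i p_i)\sum_i H(X_i\mid X_{<i})=(1-\max_i p_i)\,H(X)$, the point being that the chain-rule terms sum exactly to $H(X)$ rather than overshooting it.

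Combining the three bounds, under the assumption $\max_i p_i<\tfrac12$ I obtain $\sum_{S\in\mathcal{F}}\log|\mathcal{G}(S)|\ge(1-\max_i p_i)\,|\mathcal{F}|\log|\mathcal{G}|>\tfrac{|\mathcal{F}|}{2}\log|\mathcal{G}|$, so the condition fails for every admissible $\mathcal{G}$, which is exactly the contrapositive of the backward direction. The only genuinely delicate point is the entropy inequality of the previous paragraph; everything else is bookkeeping around the projection identity for $|\mathcal{G}(S)|$.
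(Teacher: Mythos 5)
Your argument is correct, and its overall skeleton coincides with the paper's: the witness $\mathcal{G}=\{\emptyset,\{i\}\}$ for the forward direction, and for the converse a decomposition $H(X)=\sum_i x_i$ into nonnegative pieces with each projection's entropy bounded below by the corresponding partial sum. The one genuine difference is how that key lemma is obtained. The paper stays in the language of set-valued random variables: it first proves the submodularity statement $H(X)-H(X\cup\{n\})\leq H(X\cup S)-H(X\cup S\cup\{n\})$ and then runs an induction on $n$ to manufacture $x_1,\dots,x_n$ with $\sum_i x_i=H(X)$ and $H(X\cup S)\geq\sum_{i\notin S}x_i$. You instead observe at the outset that $S\cup G$ is equivalent data to the trace $G\cap([n]\setminus S)$, so $H(X\cup S)=H(X_{[n]\setminus S})$, after which the lemma is the standard chain-rule fact $H(X_{[n]\setminus S})\geq\sum_{i\notin S}H(X_i\mid X_{<i})$. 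Your numbers $x_i=H(X_i\mid X_{<i})$ are in fact exactly those the paper's induction constructs, since $H(X\cup\{i+1,\dots,n\})-H(X\cup\{i,\dots,n\})=H(X_{\leq i})-H(X_{\leq i-1})$; and the paper's Proposition becomes transparent in your coordinates (it is just ``conditioning on fewer variables cannot decrease entropy''). So your route is shorter and replaces the induction by a one-line appeal to the chain rule, at the small cost of fixing a coordinate order, while the paper's version is coordinate-free. Your phrasing of the converse as a contrapositive via $\max_i p_i$ is only cosmetically different from the paper's direct averaging $\sum_i x_i\,w_{\mathcal{F}}(i)\leq\tfrac{|\mathcal{F}|}{2}\sum_i x_i$; both conclude by noting $\sum_i x_i=\log|\mathcal{G}|>0$.
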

\section{Notation and Preliminaries}
All the $\log$ in this paper are of base $2$. For a random variable $X$ valued in sets and a set $S$, we could define random variables $X\cup S,X/S,...$ in a natural way, and denote $H(X),|X|$ be the entropy and the number of possible value of $|X|$ For a finite family $\mathcal{F}$ of set, let $X_{\mathcal{F}}$ be a random variable of sets sampled uniformly at random from $\mathcal{F}$.

We will use these following properties of entropy:
\begin{enumerate}
\item $0\leq H(X)\leq \log|X|$
\item $H(X,Y,Z)+H(Z)\leq H(X,Z)+H(Y,Z)$
\item $H(X|f(X))+H(f(X))=H(X,f(X))=H(X)$ for function $f$. 
\end{enumerate}
\section{Main lemma}
First, we will prove the following lemma:
\begin{lemma}
\label{mainlemma}
Let $X$ be a random variable of sets sampled from $2^{[n]}$, then there exists $n$ nonnegative numbers $x_1,x_2,...,x_n$ such that $H(X)=\sum_{i=1}^nx_i$ and for every set $S\subseteq[n]$, we have: $$H(X\cup S)\geq\sum_{i\in[n]/S}x_i$$.
\end{lemma}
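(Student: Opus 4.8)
The plan is to reformulate the statement in terms of the submodularity of entropy. First I would identify $X$ with its indicator vector $(X_1,\dots,X_n)$, where $X_i$ records whether $i\in X$, so that $H(X)=H(X_1,\dots,X_n)$. Passing from $X$ to $X\cup S$ forces coordinate $i$ to equal $1$ for each $i\in S$ and leaves every other coordinate equal to $X_i$; hence $X\cup S$ and the tuple $(X_i)_{i\in[n]/S}$ are deterministic functions of one another, and by property (3) they have the same entropy: $H(X\cup S)=H\big((X_i)_{i\in[n]/S}\big)$. Writing $T=[n]/S$ and introducing the set function $f(T)=H\big((X_i)_{i\in T}\big)$, the two desired conclusions become $\sum_{i}x_i=f([n])=H(X)$ together with $\sum_{i\in T}x_i\le f(T)$ for every $T\subseteq[n]$. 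In short, I want a nonnegative vector $x$ lying in the base polytope of $f$.

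The key observation is that $f$ is a normalized, monotone, submodular set function. Indeed $f(\emptyset)=0$, adjoining coordinates cannot decrease entropy so $f$ is monotone, and the submodular inequality $f(A\cup B)+f(A\cap B)\le f(A)+f(B)$ is exactly property (2) with $X=(X_i)_{i\in A/B}$, $Y=(X_i)_{i\in B/A}$, and $Z=(X_i)_{i\in A\cap B}$. These are precisely the axioms of a polymatroid rank function, so I would construct $x$ by the standard greedy recipe: fix the order $1<2<\dots<n$ and set $x_i=f(\{1,\dots,i\})-f(\{1,\dots,i-1\})$, the marginal entropy gained when the $i$-th coordinate is revealed. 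Monotonicity gives $x_i\ge0$, and the telescoping sum collapses to $f([n])-f(\emptyset)=H(X)$, which settles the equality constraint.

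It remains to verify $\sum_{i\in T}x_i\le f(T)$ for every $T$, where submodularity does the real work. Listing $T=\{t_1<t_2<\dots<t_k\}$, the diminishing-returns form of submodularity applied to $\{t_1,\dots,t_{j-1}\}\subseteq\{1,\dots,t_j-1\}$ with the new element $t_j$ yields
$$x_{t_j}=f(\{1,\dots,t_j\})-f(\{1,\dots,t_j-1\})\le f(\{t_1,\dots,t_j\})-f(\{t_1,\dots,t_{j-1}\}),$$
and summing over $j$ telescopes the right-hand side to $f(T)$. I expect the main obstacle to be bookkeeping rather than insight: first confirming cleanly that $H(X\cup S)$ really collapses to the entropy of the free coordinates $(X_i)_{i\in[n]/S}$, and then applying property (2) with exactly the right partition of coordinates to extract submodularity in the increment form the greedy argument needs. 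Once those two reductions are pinned down, the remainder is the routine polymatroid computation above.
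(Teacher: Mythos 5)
Your proof is correct and is essentially the paper's argument in polymatroid clothing: since $H(X\cup S)=H\big((X_i)_{i\in[n]/S}\big)$, your greedy values $x_i=H(X_i\mid X_1,\dots,X_{i-1})$ coincide with the numbers the paper defines by induction (its $x_n=H(X)-H(X\cup\{n\})$ is exactly $H(X_n\mid X_1,\dots,X_{n-1})$), and your diminishing-returns inequality is precisely Proposition \ref{basic} restated for coordinate tuples. The only presentational difference is that you telescope explicitly where the paper unrolls the same telescoping via induction on $n$, so nothing of substance differs.
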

\begin{proposition}
\label{basic}
For every set $S\subseteq[n], n\notin S$, we have:
$$H(X)-H(X\cup\{n\})\leq H(X\cup S)-H(X\cup S\cup\{n\})$$
\end{proposition}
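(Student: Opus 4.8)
The plan is to reduce the inequality to the submodularity property (2) by a careful choice of auxiliary random variables. First I would introduce $U = X\cup\{n\}$, $V = X\cup S$, and $W = X\cup S\cup\{n\}$, each of which is a function of $X$. Rearranging the claim, it suffices to prove
$$H(X) + H(W) \le H(U) + H(V).$$

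Next I would record the set-theoretic identities that make everything fit. The hypothesis $n\notin S$ is exactly what is needed: by the distributive law, $U\cap V = (X\cup\{n\})\cap(X\cup S) = X\cup(\{n\}\cap S) = X$, so $X$ is a function of the pair $(U,V)$; since conversely $U$ and $V$ are each functions of $X$, property (3) gives $H(U,V) = H(X)$. Moreover $W = U\cup S$ is a function of $U$ and $W = V\cup\{n\}$ is a function of $V$, so property (3) yields $H(U,W) = H(U)$ and $H(V,W) = H(V)$; and since $W$ is a function of $(U,V)$ we get $H(U,V,W) = H(U,V) = H(X)$.

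Finally I would instantiate property (2), $H(A,B,C)+H(C)\le H(A,C)+H(B,C)$, with $A=U$, $B=V$, $C=W$. The left side becomes $H(U,V,W)+H(W) = H(X)+H(W)$ and the right side becomes $H(U,W)+H(V,W) = H(U)+H(V)$, which is precisely the target inequality; undoing the rearrangement recovers the statement.

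The step demanding the most care is the verification that $n\notin S$ forces $U\cap V = X$: without it, $X$ is not recoverable from $(U,V)$, the equality $H(U,V)=H(X)$ fails, and the argument collapses. Everything else is bookkeeping with property (3). As a sanity check and alternative route, one can instead rewrite each side as a conditional entropy via property (3): $H(X)-H(X\cup\{n\}) = H(X\mid X\cup\{n\}) = H(b\mid X\setminus\{n\})$ where $b = \mathbf{1}[n\in X]$, and similarly the right side equals $H(b\mid (X\setminus\{n\})\cup S)$. The claim then reads $H(b\mid X\setminus\{n\}) \le H(b\mid (X\setminus\{n\})\cup S)$, which holds because $(X\setminus\{n\})\cup S$ is a coarsening of $X\setminus\{n\}$ and conditioning on strictly more information cannot increase entropy.
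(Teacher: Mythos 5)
Your main argument is correct and is essentially the paper's own proof: both recover $X$ from the pair $(X\cup S,\,X\cup\{n\})$ using $n\notin S$ (you via the distributive law $U\cap V=X$, the paper via a case analysis on whether $n\in R\cup S$) and then apply the submodularity property (2) with the third variable $X\cup S\cup\{n\}$, which is a function of each of the other two. The conditional-entropy reformulation you append as a sanity check is a valid equivalent phrasing of the same inequality, so there is nothing genuinely different to compare.
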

\begin{proof}
For $R\subseteq[n]$, if we know $R\cup S$ and $R\cup\{n\}$, we could determine $R$, which is $R\cup\{n\}$ if $n\in R\cup S$ and $(R\cup\{n\})/\{n\}$ if otherwise. From this, we could deduce that $H(X)=H(X,X\cup S,X\cup\{n\})=H(X\cup S,X\cup\{n\})$.

Now we have:
\begin{equation*}
\begin{split}
& H(X\cup S,X\cup\{n\},X\cup S\cup\{n\})+H(X\cup S\cup\{n\})\leq H(X\cup S,X\cup S\cup\{n\})+H(X\cup\{n\},X\cup S\cup\{n\}) \\ & \Rightarrow H(X)+H(X\cup S\cup\{n\})\leq H(X\cup S)+H(X\cup\{n\}) \\ & \Rightarrow H(X)-H(X\cup\{n\})\leq H(X\cup S)-H(X\cup S\cup\{n\})
\end{split}
\end{equation*}
\end{proof}
\begin{proof}
[Proof of lemma \ref{mainlemma}]
We will prove by induction on $n$. For $n=1$, we could choose $x_1=H(X)$ and it's easy to show $x_1$ satisfies the lemma.

Assume the lemma is true with $n=k-1$. For $n=k$ and random variable $X$ of sets sampled from $2^{[n]}$, consider $(X\cup\{n\})/\{n\}$ and view it as random variable of sets sampled from $2^{[n-1]}$. Apply the induction hypothesis, there exists nonnegative numbers $x_1,x_2,...,x_{n-1}$ such that $H(X\cup\{n\})=H((X\cup\{n\})/\{n\})=\sum_{i=1}^{n-1}x_i$ and for every set $S\in[n-1]$, we have: $$H(X\cup S\cup\{n\})=H(((X\cup\{n\})/\{n\})\cup S)\geq\sum_{i\in[n-1]/S}x_i$$
Now we take $x_n=H(X|X\cup\{n\})=H(X)-H(X\cup\{n\})\geq0$, then we just need to check $H(X\cup S)\geq\sum_{[n]/S}x_i$ for every set $S\subseteq[n],n\notin S$. By proposition \ref{basic} we have:
\begin{equation*}
\begin{split}
\sum_{i\in[n]/S}x_i & =\sum_{i\in[n-1]/S}x_i+x_n \\ & \leq H(X\cup S\cup\{n\})+(H(X\cup S)-H(X\cup S\cup\{n\})) \\ & =H(X\cup S)
\end{split} 
\end{equation*}
as we want.
\end{proof}
\section{Proof of the main theorem}
For finite family of sets $\mathcal{F}$, denote $w_{\mathcal{F}}(i)$ be the number of sets in $\mathcal{F}$ that not contain $i$.
\begin{proof}
[Proof of theorem \ref{main}]
\textbf{Only if:} Take $i\in[n]$ such that at least half of the sets in $\mathcal{F}$ contain $i$, then $w_{\mathcal{F}}(i)\leq\frac{|\mathcal{F}|}{2}$. We take $\mathcal{G}=\{\emptyset,\{i\}\}$, then $|\mathcal{G}|=2$, for $S\in\mathcal{F}$, $|\mathcal{G}(S)|=|\{S\}|=1$ if $i\in S$ and $|\mathcal{G}(S)|=|\{S,S\cup\{i\}|=2$ if otherwise, so we have: 
\begin{equation*}
\begin{split} \sum_{S\in\mathcal{F}}\log|\mathcal{G}(S)| & = \sum_{S\in\mathcal{F},i\in S}\log|\mathcal{G}(S)|+\sum_{S\in\mathcal{F},i\notin S}\log|\mathcal{G}(S)| \\ & =(|\mathcal{F}|-w_{\mathcal{F}}(i))\log1+w_{\mathcal{F}}(i)\log2=w_{\mathcal{F}}(i) \\ & \leq \frac{|\mathcal{F}|}{2}=\frac{|\mathcal{F}|\log|\mathcal{G}|}{2}
\end{split}
\end{equation*}
\textbf{If:} For family of sets $\mathcal{G}$ satisfies the condition of the theorem, take $n$ nonnegative numbers $x_1,x_2,...,x_n$ by applying lemma \ref{mainlemma} for the random variable $X_{\mathcal{G}}$, then we have:
\begin{equation*}
\begin{split}
\sum_{i=1}^nx_iw_{\mathcal{F}}(i) & =\sum_{i=1}^n\sum_{S\in\mathcal{F}, i\notin S}x_i \\ & =\sum_{S\in\mathcal{F}}\sum_{i\in[n]/S}x_i \\ & \leq\sum_{S\in\mathcal{F}}H(X_{\mathcal{G}}\cup S) \\ & \leq\sum_{S\in\mathcal{F}}\log|X_{\mathcal{G}}\cup S| \\ & =\sum_{S\in\mathcal{F}}\log|\mathcal{G}(S)| \\ & \leq\frac{|\mathcal{F}|\log|\mathcal{G}|}{2} \\ & =\frac{|\mathcal{F}|H(X_{\mathcal{G}})}{2}=\frac{|\mathcal{F}|}{2}\sum_{i=1}^nx_i
\end{split}
\end{equation*}
so there exists $i\in[n]$ such that $w_{\mathcal{F}}(i)\leq\frac{|\mathcal{F}|}{2}$ (as $\sum_{i=1}^nx_i=H(X_{\mathcal{G}})=\log|\mathcal{G}|>0$), in other word, at least half of the sets in $\mathcal{F}$ contain $i$
\end{proof}
\begin{corollary}
The union-closed conjecture is true for finite union-closed family of sets $\mathcal{F}$ if there exists subfamily $\mathcal{G}\subseteq\mathcal{F},|\mathcal{G}|>1$ such that: $$\sum_{S\in\mathcal{F}}\log|\mathcal{G}(S)|\leq\frac{|\mathcal{F}|\log|\mathcal{G}|}{2}$$
\end{corollary}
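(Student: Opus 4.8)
The plan is to obtain this corollary directly from the ``if'' direction of Theorem \ref{main}, with essentially no extra work. First I would observe that since $\mathcal{F}\subseteq 2^{[n]}$, any subfamily $\mathcal{G}\subseteq\mathcal{F}$ is in particular a family $\mathcal{G}\subseteq 2^{[n]}$ with $|\mathcal{G}|>1$, so the hypothesis of the corollary is merely a special case of the hypothesis appearing in Theorem \ref{main}. The assumed inequality $\sum_{S\in\mathcal{F}}\log|\mathcal{G}(S)|\leq\frac{|\mathcal{F}|\log|\mathcal{G}|}{2}$ is literally the condition that the theorem requires of the witnessing family.

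Next I would apply the ``if'' direction of Theorem \ref{main} to this very $\mathcal{G}$. It yields an element $i\in[n]$ such that at least half of the sets in $\mathcal{F}$ contain $i$, which is precisely the assertion of Frankl's conjecture for $\mathcal{F}$, so the corollary follows. The only point worth spelling out is that one is allowed to draw $\mathcal{G}$ from inside $\mathcal{F}$ rather than from all of $2^{[n]}$; this is immediate, since restricting the search space for $\mathcal{G}$ only makes the hypothesis stronger, and the theorem's sufficient condition then applies verbatim.

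I do not foresee a genuine obstacle here: all of the content lives in Theorem \ref{main}, and the corollary is a straightforward specialization of its sufficient condition. I would, however, add a short remark that the union-closed hypothesis on $\mathcal{F}$ plays no role in the argument itself --- the conclusion ``some element lies in at least half the sets'' is exactly what Theorem \ref{main} delivers for an arbitrary finite family. Union-closedness enters only to situate the result within Frankl's conjecture, whose statement is framed specifically for union-closed families; thus the ``hard part,'' to the extent there is one, is purely expository rather than mathematical.
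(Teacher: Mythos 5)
Your proposal is correct and matches the paper's (implicit) argument exactly: the corollary is stated without a separate proof precisely because it is the immediate specialization of the ``if'' direction of Theorem \ref{main} to a witnessing family $\mathcal{G}$ chosen inside $\mathcal{F}$. Your remark that union-closedness of $\mathcal{F}$ is not used in the deduction itself is also accurate.
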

This corollary does not depend on the base set but only the union structure of the family $\mathcal{F}$. We have some strategies to prove the union-closed conjecture for $\mathcal{F}$

A natural way to choose the subfamily $\mathcal{G}$ is the family $\mathcal{F}$ itself, but the condition $\sum_{S\in\mathcal{F}}\log|\mathcal{F}(S)|\leq\frac{|\mathcal{F}|\log|\mathcal{F}|}{2}$ is not always true, for example $\mathcal{F}=\{\emptyset,\{1\},\{1,2\}\}$. We may need another quantity on random variable valued on sets which behave like entropy to prove the union-closed conjecture this way.

Another strategy is note that the union-closed conjecture is true for $\mathcal{F}$ if it is true for $\mathcal{F}^N$ for some $N$. For very small $\epsilon>0$ and very large $N$ depend on $\epsilon$, take $\mathcal{G}$ be the subfamily of $\mathcal{F}^N$ consist all the set $S$ such that $|\mathcal{F}^N(S)|\geq|\mathcal{F}|^{\epsilon(N-2)}$. We expect that for many cases, $|\mathcal{G}|\leq(\frac{1}{2}-\epsilon)|\mathcal{F}^N|$, then:
\begin{equation*}
\begin{split}
\sum_{S\in\mathcal{F}^N}\log|\mathcal{G}(S)| & =\sum_{S\in\mathcal{G}}\log|\mathcal{G}(S)|+\sum_{S\in\mathcal{F}^N/\mathcal{G}}\log|\mathcal{G}(S)| \\ & \leq\sum_{S\in\mathcal{G}}\log|\mathcal{G}|+\sum_{S\in\mathcal{F}^N/\mathcal{G}}\log|\mathcal{F^N}(S)| \\ & \leq|\mathcal{G}|\log|\mathcal{G}|+|\mathcal{F}^N|\log|\mathcal{F}|^{\epsilon(N-2)} \\ & \leq(\frac{1}{2}-\epsilon)|\mathcal{F}^N|\log|\mathcal{G}|+|\mathcal{F}^N|(\log|\mathcal{F}|^{N\epsilon}-\log|\mathcal{F}|^{2\epsilon}) \\ & \leq(\frac{1}{2}-\epsilon)|\mathcal{F}^N|\log|\mathcal{G}|+|\mathcal{F}^N|(\log|\mathcal{F}^N|^{\epsilon}-\log2^{2\epsilon}) \\ & =(\frac{1}{2}-\epsilon)|\mathcal{F}^N|\log|\mathcal{G}|+\epsilon(|\mathcal{F}^N|(\log|\mathcal{F}^N|-2) \\ & \leq(\frac{1}{2}-\epsilon)|\mathcal{F}^N|\log|\mathcal{G}|+\epsilon(|\mathcal{F}^N|(\log|\mathcal{G}|)=\frac{|\mathcal{F}^N|\log|\mathcal{G}|}{2}
\end{split}
\end{equation*}
so $\mathcal{F}^N$ as well as $\mathcal{F}$ statisfies the union-closed conjecture.

\end{document}